\newtheorem{theorem}{Theorem}[section]
\newtheorem{Lemma}[theorem]{Lemma}
\begin{document}
	
	\title[Logarithmic-Sobolev inequalities]{Logarithmic Sobolev inequality in manifolds with nonnegative curvature via the ABP method}
	\author{Lingen Lu}
	
	\address{School of Mathematics and Statistics, Hainan University, Haikou, 570228, P. R. China}
	\email{lulingen@hainanu.edu.cn}
	\begin{abstract}
		In this paper, we employ the ABP method developed by Brendle \cite{Brendle2} to establish the optimal $L^p$ logarithmic Sobolev inequality on manifolds with nonnegative Ricci curvature, as well as a sharp $L^2$ logarithmic Sobolev inequality for submanifolds in manifolds with nonnegative sectional curvature. The sharp constants in both inequalities depend on the asymptotic volume ratio of the ambient manifold.
	\end{abstract}
	
	\maketitle 
	
	\section{Introduction} 
	The classical logarithmic Sobolev inequality on $\mathbb{R}^n$, which was established by Stam \cite{Stam} and Federbush \cite{Federbush}, and later highlighted by Gross \cite{Gross} through its equivalence to the hypercontractivity of a diffusion semigroup, is stated as:	
	\[
		\int_{\mathbb{R}^n} f^2\log f^2\, d\gamma-\bigg(\int_{\mathbb{R}^n} f^2\, d\gamma\bigg)\log\bigg(\int_{\mathbb{R}^n} f^2\, d\gamma\bigg) \le2\int_{\mathbb{R}^n}|Df|^2\, d\gamma,
	\]
	where $d\gamma=(2\pi)^{-\frac n2}e^{-\frac{|x|^2}{2}}\, dx$ is the standard Gaussian probability measure and $f\in C_c^\infty(\mathbb{R}^n)$. This is equivalent to the  optimal $L^2$ logarithmic Sobolev inequality established by Weissler \cite{Weissler}: 
	\[
		\int_{\mathbb{R}^n} f^2\log f^2\, dx\le\frac n2\log\bigg(\frac{2}{\pi e n}\int_{\mathbb{R}^n}|Df|^2\, dx\bigg),
	\]
	valid for $f\in C_c^\infty(\mathbb{R}^n)$ satisfying $\int_{\mathbb{R}^n}f^2\, dx=1$.
	
	The following Euclidean $L^p$ logarithmic Sobolev inequality, established by Beckner \cite{Beckner} for $p=1$, Del Pino and Dolbeault \cite{DPD} for $1<p<n$ and Gentil \cite{Gentil} for general $p>1$, asserts that for $f\in C_c^\infty(\mathbb{R}^n)$ with $\int_{\mathbb{R}^n}f^p\, dx=1$,   
	\[
		\int_{\mathbb{R}^n} |f|^p\log |f|^p\, dx\le\frac np\log\bigg(\mathrm{L}(n,p)\int_{\mathbb{R}^n}|Df|^p\, dx\bigg),
	\]
	where the sharp constant $\mathrm{L}(n,p)$ is given by
	\[
		\mathrm{L}(n,p)=\left\{
		\begin{array}{ll}
			n^{-1}\omega_n^{-\frac1n} & \text{if } p=1,\\
			\frac pn \Big(\frac{p-1}{e}\Big)^{p-1}\Big(\omega_n \Gamma\Big(\frac nq+1\Big)\Big)^{-\frac pn} & \text{if } p>1,
		\end{array} \right.		
	\]
	with $\Gamma$ the Gamma function, $q=\frac{p}{p-1}$ the conjugate of $p$, and $\omega_n=\frac{\pi^{\frac n2}}{\Gamma(\frac n2+1)}$ the volume of the unit ball in $\mathbb{R}^n$.
	
	Recently, by generalizing the optimal transport approach developed by Cordero-Erausquin, Nazaret and Villani \cite{CENV}, Krist\'aly \cite{Kristaly} obtained the optimal $L^p (p\ge1)$ logarithmic Sobolev inequality for manifolds with nonnegative Ricci curvature. In this paper, inspired by the ABP method developed by Brendle \cite{Brendle2}, our first goal is to provide a ABP-based proof of this inequality.
	
	Let $(M^n,g)$, with $n\ge2$, be a complete noncompact manifold with nonnegative Ricci curvature. Denote the asymptotic volume ratio of $M$ by
	\[
		\theta:=\lim_{r\to\infty}\frac{|B_o(r)|}{\omega_n r^n},
	\]
	where $B_o(r)$ is the metric ball centered at $o\in M$  with radius $r$. The Bishop-Gromov volume comparison theorem implies that $\theta\le1$. If $\theta>0$, then we say that $M$ has Euclidean volume growth.

	The sharp $L^1$ logarithmic Sobolev inequality for manifolds with nonnegative Ricci curvature, proved using the ABP method developed by Brendle \cite{Brendle2}, reads as follows:
	\begin{theorem}\label{thm 1.1}
		Let $(M^n,g)$, with $n\ge2$, be a complete noncompact Riemannian manifold with nonnegative Ricci curvature and Euclidean volume growth. Let $\Omega$ be a compact domain in $M$ with smooth boundary $\partial\Omega$, and let $f$ be a nonnegative smooth function on $\Omega$ with $\int_\Omega f \, d\mathrm{vol}=1$. Then
		\[
			\int_\Omega f\log f \, d\mathrm{vol}\le n\log\bigg(n^{-1}\omega_n^{-\frac1n}\theta^{-\frac1n}\bigg(\int_\Omega |Df|\, d\mathrm{vol}+\int_{\partial\Omega}f \, da\bigg)\bigg),
		\]
		where $\theta$ denotes the asymptotic volume ratio of $M$.
	\end{theorem}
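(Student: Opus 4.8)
The plan is to transpose Brendle's ABP argument --- the one underlying his proof of the Michael--Simon--Sobolev inequality --- to the entropy functional $\int_\Omega f\log f\,d\mathrm{vol}$. By a routine approximation (perturb $f$ to a positive function, renormalize, and pass to the limit) we may assume $f>0$ on $\Omega$. Put $W:=\int_\Omega|\nabla f|\,d\mathrm{vol}+\int_{\partial\Omega}f\,da$; the assertion is equivalent to
\[
W\ \ge\ n\,(\omega_n\theta)^{1/n}\,\exp\!\Big(\tfrac1n\!\int_\Omega f\log f\,d\mathrm{vol}\Big).
\]
Fix $r>0$. Since $f>0$, the operator $w\mapsto\mathrm{div}(f\nabla w)$ is uniformly elliptic on the compact $\Omega$, so one can solve the Neumann problem
\[
\mathrm{div}(f\nabla u)=f\big(\log f-c_r\big)\ \text{ in }\Omega,\qquad \langle\nabla u,\nu\rangle=r\ \text{ on }\partial\Omega,\qquad c_r:=\int_\Omega f\log f\,d\mathrm{vol}-r\!\int_{\partial\Omega}\!f\,da
\]
the constant $c_r$ being exactly the one the compatibility condition forces once $\int_\Omega f\,d\mathrm{vol}=1$ is used; elliptic regularity then gives $u\in C^\infty(\Omega)$.

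For each $r$ let $U_r\subseteq\Omega$ be the contact set: the interior points $x$ with $|\nabla u(x)|<r$ at which the affine-type function $y\mapsto u(x)+\langle\nabla u(x),\exp_x^{-1}(y)\rangle$ supports $u$ from below globally; at such $x$ one has $\mathrm{Hess}\,u(x)\ge0$, and the Neumann datum $\langle\nabla u,\nu\rangle=r$ is precisely what keeps the relevant infima from escaping to $\partial\Omega$. Put $\Phi(x)=\exp_x\big(\nabla u(x)\big)$ for $x\in U_r$. Two facts are taken over verbatim from Brendle's work. First, the covering estimate: $\Phi$ is injective on $U_r$, and $\Phi(U_r)$ contains a measurable set of volume at least $\omega_n\theta\,r^n$; this is where the asymptotic volume ratio enters, through the Bishop--Gromov bound $|B(\rho)|\ge\omega_n\theta\rho^n$, valid because $\mathrm{Ric}\ge0$. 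Second, the Jacobian estimate: nonnegative Ricci curvature makes $t\mapsto\big(\mathrm{Jac}\,\exp_x(t\nabla u(x))\big)^{1/n}$ concave on $[0,1]$ with value $1$ and slope $\tfrac1n\Delta u(x)$ at $t=0$, so that $\mathrm{Jac}\,\Phi(x)\le\big(1+\tfrac1n\Delta u(x)\big)^n$ by the arithmetic--geometric mean inequality (and $1+\tfrac1n\Delta u(x)\ge0$ on $U_r$ because $\mathrm{Hess}\,u(x)\ge0$ there).

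Combining the covering estimate with the area formula $|\Phi(U_r)|=\int_{U_r}\mathrm{Jac}\,\Phi\,d\mathrm{vol}$, the Jacobian bound, and the equation --- which expresses $\Delta u$ on $U_r$ through $\log f$ and $\langle\nabla f,\nabla u\rangle$, the latter controlled by $r|\nabla f|$ since $|\nabla u|<r$ on $U_r$ --- and then using Jensen's inequality to move the logarithm outside the integral (legitimate since $\int_\Omega f\,d\mathrm{vol}=1$), one is led, after a limiting argument $r\to\infty$, to $\int_\Omega f\log f\,d\mathrm{vol}\le n\log\big(n^{-1}(\omega_n\theta)^{-1/n}W\big)$, which is the claim. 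I anticipate two genuine difficulties. The first is Brendle's covering estimate, the geometric core of the method. The second is the combination step itself: every integration has to remain on the contact set $U_r$, one must still recover exactly the term $\int_\Omega|\nabla f|\,d\mathrm{vol}$ rather than some uncontrolled higher power of the gradient --- handled, as in Brendle's treatment, by first passing to the $r$-Lipschitz inf-convolution of $u$ so that the bound on $|\nabla u|$ holds throughout $\Omega$ --- and, as $r\to\infty$, one must check both that $U_r$ captures almost all the mass of $f$ and that the entropy of the measure pushed forward by $\Phi$ does not obstruct the estimate. (We note in passing that for $p=1$ the inequality can alternatively be deduced at once from Brendle's $L^1$ Sobolev inequality by Jensen's inequality; the ABP argument above is the one that generalizes to general $L^p$.)
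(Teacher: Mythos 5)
Your overall template---solve a Neumann problem, track a contact set, map it into $M$ by $\exp_x(\nabla u)$, invoke Brendle's covering and Jacobian lemmas, and compare volumes---is exactly the paper's strategy. But two essential pieces of the mechanism are missing, and without them the argument breaks down quantitatively.

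\emph{The Neumann PDE lacks the gradient term.} You solve $\mathrm{div}(f\nabla u)=f(\log f-c_r)$ with a constant $c_r$ on the right. Expanding the divergence on the contact set gives
$\Delta u=\log f-c_r-\langle\nabla f,\nabla u\rangle/f$, and since all you know there is $|\nabla u|<r$, the best Cauchy--Schwarz bound is $\Delta u\le\log f-c_r+r\,|\nabla f|/f$. After the AM--GM/exponential step the Jacobian is then bounded by $f\,e^{-c_r}\,e^{r|\nabla f|/f}$, and the extra factor $e^{r|\nabla f|/f}$ overwhelms the polynomial $r^n$ coming from the covering estimate as $r\to\infty$; you do not recover the linear quantity $\int_\Omega|\nabla f|$. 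The paper avoids this by building the gradient term into the PDE from the outset: it solves
$\mathrm{div}(hDu)=h\log h-\tfrac1\alpha|Dh|$ with boundary datum $\langle Du,\nu\rangle=\tfrac1\alpha$, so that on the set where $|Du|<\tfrac1\alpha$ Cauchy--Schwarz gives $-\tfrac{|Dh|}{\alpha h}-\tfrac{\langle Dh,Du\rangle}{h}\le0$ and hence the clean bound $\Delta u\le\log h$, with no gradient of $f$ surviving inside the exponential. Your inf-convolution remark does not substitute for this; the cancellation really happens at the level of the PDE's right-hand side.

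\emph{You conflate two parameters.} In the paper there are two independent dials: a fixed $\alpha>0$ (the boundary flux $\tfrac1\alpha$ and the threshold $|Du|<\tfrac1\alpha$), and a separate $r\to\infty$ (the transport map is $\exp_x(rDu)$, the covered ball has radius $\sim r/\alpha$). One first lets $r\to\infty$ to get an $\alpha$-dependent inequality, then minimizes over $\alpha$ to obtain the sharp constant, which lands at $\alpha=\tfrac1n\big(\int_\Omega|Df|+\int_{\partial\Omega}f\big)$. In your proposal the single parameter $r$ plays both roles (it sits in the Neumann datum and controls the covering radius), so there is no residual free parameter to optimize; in particular your constant $c_r=\int_\Omega f\log f-r\int_{\partial\Omega}f\,da$ diverges as $r\to\infty$ with no compensating mechanism. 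Relatedly, your ``Jensen's inequality'' step is not the tool used here: the paper instead normalizes $h=c(f+1/r)$ so that the compatibility identity holds as an exact equality (its equation (2.1)), then combines it with the ABP volume estimate and optimizes in $\alpha$.

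A smaller point: the paper works with $h=c(f+1/r)$ rather than a generic positive perturbation of $f$; the $1/r$ simultaneously keeps the operator nondegenerate and disappears in the $r\to\infty$ limit, while the free constant $c$ is what enforces the normalization identity. Your version handles positivity but leaves the normalization unaddressed.
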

	The sharp $L^p(p>1)$ logarithmic Sobolev inequality for manifolds with nonnegative Ricci curvature, which we established based on the ABP method developed by Brendle \cite{Brendle2}, reads as follows:
	\begin{theorem}\label{thm 1.2}
		Let $(M^n,g)$, with $n\ge2$, be a complete noncompact Riemannian manifold with nonnegative Ricci curvature and Euclidean volume growth. Let $\Omega$ be a compact domain in $M$ with smooth boundary $\partial\Omega$, and let $f$ be a nonnegative smooth function on $\Omega$ vanishing on $\partial\Omega$ such that $\int_\Omega f^p \, d\mathrm{vol}=1$ . For $p>1$, we have
		\[
			\int_\Omega f^p\log f^p \, d\mathrm{vol}\le\frac{n}{p}\log\bigg(\mathrm{L}(n,p)\theta^{-\frac{p}{n}}\int_\Omega |Df|^p\, d\mathrm{vol}\bigg),
		\]
		where $\theta$ denotes the asymptotic volume ratio of $M$.
	\end{theorem}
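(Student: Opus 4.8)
The plan is to implement Brendle's ABP scheme \cite{Brendle2}, in exact parallel with the proof of Theorem~\ref{thm 1.1}, with the reference profile and the source term of the auxiliary equation replaced by their $L^p$ counterparts; the sharp constant $\mathrm{L}(n,p)$ --- in particular the factor $\big(\frac{p-1}{e}\big)^{p-1}$ --- will emerge from two one-dimensional optimizations at the end. Set $q=\frac{p}{p-1}$, and let $\bar h(\xi)=c_{n,p}\,e^{-(p-1)|\xi|^{q}}$ on $\mathbb{R}^n$, where $c_{n,p}>0$ is fixed so that $\int_{\mathbb{R}^n}\bar h=1$; this is the $p$-th power of the Euclidean extremal (unique up to dilations), and the two model constants $\int_{\mathbb{R}^n}|\xi|^q\bar h\,d\xi$ and $\int_{\mathbb{R}^n}\bar h\log\bar h\,d\xi$ are precisely what reassembles into $\mathrm{L}(n,p)$. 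After a standard approximation (so that $\Omega=\{f>0\}$ with $f>0$ in the interior and $|\nabla f|\neq 0$ on $\partial\Omega$) and with $\int_\Omega f^p\,d\mathrm{vol}=1$, I would put $E:=\int_\Omega f^p\log f^p\,d\mathrm{vol}$ and let $u$ solve
\[
\mathrm{div}\!\big(f^p\,\nabla u\big)=f^p\big(\log f^p-E\big)\quad\text{on }\Omega .
\]
The compatibility condition holds because $\int_\Omega f^p(\log f^p-E)\,d\mathrm{vol}=0$; the degeneracy of the weight $f^p$ at $\partial\Omega$, where $f$ vanishes, replaces the boundary condition and forces $|\nabla u|\to\infty$ along $\partial\Omega$ --- which is what will let the associated normal map reach arbitrarily far into $M$, in the same way that the unbounded support of $\bar h$ does in $\mathbb{R}^n$. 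Interior elliptic theory yields $u\in C^{2,\alpha}_{\mathrm{loc}}(\Omega)$, and controlling $u$ and $\nabla u$ up to the degenerate boundary is one of the two delicate points.

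I would then introduce a dilation parameter $\sigma>0$ for the model, $\bar h_\sigma(\xi)=\sigma^{-n}\bar h(\sigma^{-1}\xi)$, and the normal map $\Phi\colon\Omega\to M$, $\Phi(x)=\exp_x(\nabla u(x))$ --- the precise shape of this vector field being the $p$-analogue of the one used for Theorem~\ref{thm 1.1}. Following \cite{Brendle2}, one forms the contact set $U\subseteq\Omega$ of points where the relevant lower support function touches and where $\nabla^2u\ge -g$, and shows that $\Phi$ maps $U$ onto $M$ up to a set of measure zero; no boundary collar is needed here, because $f$ vanishes on $\partial\Omega$, and the blow-up of $\nabla u$ at $\partial\Omega$ supplies the covering of points far from $\mathrm{supp}\,f$. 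On $U$, the differential of $\Phi$ factors as a Jacobi-field operator along $t\mapsto\exp_x(t\nabla u(x))$ composed with $\mathrm{id}+\nabla^2u(x)$, so $\mathrm{Ric}\ge 0$ gives $\mathrm{Jac}\,\Phi(x)\le\det\!\big(\mathrm{id}+\nabla^2u(x)\big)$, and the arithmetic--geometric mean inequality bounds $\log\mathrm{Jac}\,\Phi(x)$ by $n\log\!\big(1+\tfrac1n\Delta u(x)\big)$. Feeding these pointwise bounds into the area formula for $\Phi|_U$, tested against the radial weight on $M$ built from $\bar h_\sigma$, and using that the Monge--Amp\`ere identity valid for the optimal transport map is here weakened by the contact condition to a one-sided inequality, one is led to an estimate of the schematic form
\[
\int_\Omega f^p\log f^p\,d\mathrm{vol}\;\le\;\int_\Omega f^p\,\Delta u\,d\mathrm{vol}\;-\;n\log\sigma\;+\;\int_{\mathbb{R}^n}\bar h\log\bar h\,d\xi\;+\;(\text{explicit model constant}).
\]

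It then remains to handle the first term and optimize. Integration by parts (with no boundary contribution, since $f|_{\partial\Omega}=0$) gives $\int_\Omega f^p\,\Delta u\,d\mathrm{vol}=-p\int_\Omega f^{p-1}\langle\nabla f,\nabla u\rangle\,d\mathrm{vol}$, and Young's inequality with exponents $p,q$ and a free parameter $\lambda>0$, using $(p-1)q=p$, bounds the integrand by $\lambda^p|\nabla f|^p+\tfrac pq\lambda^{-q}f^p|\nabla u|^q$; the second term is controlled, again through the area formula, by the model moment $\int_{\mathbb{R}^n}|\xi|^q\bar h\,d\xi$ up to the appropriate power of $\sigma$. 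Minimizing first in $\lambda$ (where $p+q=pq$ merges the two powers) and then in $\sigma$ turns the right-hand side from linear into logarithmic in $\int_\Omega|Df|^p\,d\mathrm{vol}$ and collapses the model quantities into $\tfrac np\log\!\big(\mathrm{L}(n,p)\int_\Omega|Df|^p\,d\mathrm{vol}\big)$, the exactness of the constant being equivalent to the equality case of the Euclidean inequality. The asymptotic volume ratio enters exactly as in Theorem~\ref{thm 1.1}: because $\Phi|_U$ covers $M$, the left side of the area-formula comparison carries a factor $\int_M e^{-c\,d(o,\cdot)^q}\,d\mathrm{vol}$ for the reference point $o$, and the Bishop--Gromov inequality --- in the integrated form $\int_M e^{-c\,d(o,\cdot)^q}\,d\mathrm{vol}\ge\theta\int_{\mathbb{R}^n}e^{-c|\xi|^q}\,d\xi$, which follows from $|B_o(r)|\ge\theta\,\omega_n r^n$ for all $r$ --- produces the extra factor $\theta^{-p/n}$. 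I expect the two main obstacles to be: (i) the degeneracy of $f^p$ at $\partial\Omega$, i.e.\ the regularity and boundary behaviour of $u$ together with running the contact/covering argument cleanly against the non-compactly-supported profile; and (ii) the bookkeeping required to check that each slack term --- the Ricci contribution to the Jacobi estimate, the loss in $\log(1+t)\le t$, and any loss in the change of variables --- carries the favourable sign, so that the sharp constant $\mathrm{L}(n,p)$ is not degraded.
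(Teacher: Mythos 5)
Your proposal captures the correct high-level ABP architecture (auxiliary elliptic equation, transport map $\exp_x(\nabla u(x))$, contact set, Jacobi/Jacobian determinant bound via $\mathrm{Ric}\ge 0$, AM--GM, Young with conjugate exponents, optimization of free parameters, Bishop--Gromov for the $\theta$ factor) and you correctly identify where the constant $\mathrm{L}(n,p)$ must come from. But the one step where the paper actually does something delicate is precisely where your plan has a genuine gap: how the boundary $\partial\Omega$ is handled.

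The paper does \emph{not} solve a degenerate equation with weight $f^p$. It works instead with $h=c(f+1/r)$, which is strictly positive on all of $\overline\Omega$, so $\mathrm{div}(h^p Du)=h^p\log h^p-\frac1\alpha|Dh|^p$ is a uniformly elliptic Neumann problem with the boundary condition $\langle Du,\nu\rangle=r^{p-1}/\alpha$. That Neumann condition is exactly what makes Brendle's covering lemma work: when one minimizes $x\mapsto ru(x)+\tfrac12 d(x,\bar p)^2$ over $\overline\Omega$ for a far-away target $\bar p$, the boundary condition forces the minimizer into the interior, giving $\Omega_r\subset\Phi_r(A_r)$. The $r\to\infty$ limit then both removes the perturbation $1/r$ and sends $\Omega_r$ to all of $M$; the boundary contribution to the scaling identity (\ref{scale1-1}) vanishes automatically because $h^p\sim r^{-p}$ on $\partial\Omega$. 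In your version the weight $f^p$ genuinely degenerates on $\partial\Omega$, and you replace the Neumann condition with the heuristic that this degeneracy forces $|\nabla u|\to\infty$ there and hence $\Phi$ covers $M$ up to null sets. You flag this yourself, but it is not a cosmetic point: the sign of the blow-up of $u$ (whether $u\to+\infty$ or $-\infty$ along $\partial\Omega$) determines whether the minimizer in the covering argument is interior or boundary, and a degenerate equation of this form does not in general give you the regularity and monotone blow-up you need without a separate argument. The paper's $1/r$-perturbation is precisely the device that eliminates this difficulty, and it is what makes the proof go through.

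A secondary, minor difference: the paper has a single free parameter $\alpha$ (Young's inequality is applied with $\alpha^{1/p}$ built in, and the moment integral is computed explicitly via the co-area formula, as in (A.1) of \cite{BK}), whereas you introduce both a Young parameter $\lambda$ and a model dilation $\sigma$; these collapse to one degree of freedom after normalizing $\int f^p=1$, so this is only a bookkeeping difference. The substantive issue is the boundary regularization, which your proposal leaves open and the paper resolves.
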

	We now turn to the logarithmic Sobolev inequality for submanifolds. In \cite{Ecker}, Ecker derived such an inequality valid for submanifolds in Euclidean space. Later in 2020, Brendle \cite{Brendle3} established a sharp logarithmic Sobolev inequality for compact submanifolds in Euclidean space without boundary, employing the ABP techniques developed in \cite{Brendle1}. In 2021, Yi and Zheng \cite{YZ} proved a logarithmic Sobolev inequality for compact boundaryless submanifolds in manifolds with nonnegative sectional curvature. Recently, by developing a noncompact modification of Brendle’s ABP method, Wang, Xia and Zhang \cite{WXZ} obtained the optimal logarithmic Sobolev inequality for non-compact self-shrinkers in Euclidean space. Independently, Balogh and Krist\'aly \cite{BK} established the sharp logarithmic Sobolev inequality for submanifolds in Euclidean space using an optimal transport approach. Inspired by \cite{Brendle3, Brendle2, YZ}, our second goal herein is to provide an ABP-based proof of the optimal logarithmic Sobolev inequality for submanifolds in manifolds with nonnegative sectional curvature.
	
	The following theorem extends Theorem 1.1 of \cite{BK} to submanifolds in manifolds with nonnegative sectional curvature.
	
	\begin{theorem}\label{thm1.3}
		Let $(M^{n+m},g)$, with $n\ge2$ and $m\ge1$, be a complete noncompact  Riemannian manifold with nonnegative sectional curvature and Euclidean volume growth. Let $\Sigma^n$ be a compact submanifold of $M$, possibly with boundary, and  let $f$ be a nonnegative smooth function on $\Sigma$ vanishing on $\partial\Sigma$ with $\int_\Sigma f^2\, d\mathrm{vol}=1$. Then 
		\[
		\int_\Sigma f^2\log f^2\, d\mathrm{vol}\le\frac n2\log\bigg(\frac{2}{\pi e n}\theta^{-\frac 2n}\int_\Sigma |D^\Sigma f|^2+\frac14f^2|H|^2\, d\mathrm{vol}\bigg),
		\]
		where $H$ is the mean curvature vector of $\Sigma$ and $\theta$ denotes the asymptotic volume ratio of $M$.
	\end{theorem}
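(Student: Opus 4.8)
\emph{Strategy.} The plan is to adapt, to a complete noncompact ambient manifold with nonnegative sectional curvature, the ABP proof that Brendle gave for the logarithmic Sobolev inequality of submanifolds in Euclidean space \cite{Brendle3} (see also \cite{YZ} for the compact nonnegatively curved case); the new features are the comparison estimates in the normal directions---which is why the hypothesis must be on the sectional rather than merely the Ricci curvature---and the noncompact refinement of the ABP method from \cite{Brendle2, WXZ}, through which the asymptotic volume ratio $\theta$ enters. Since $f$ vanishes on $\partial\Sigma$, a standard approximation reduces the problem to the case of a closed submanifold $\Sigma$, with no boundary terms, carrying a positive smooth $f$. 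First I would record the assertion in the scale-invariant form
\[
\int_\Sigma\Big(|D^\Sigma f|^2+\tfrac14 f^2|H|^2\Big)\,d\mathrm{vol}\ \ge\ \frac{\pi e n}{2}\,\theta^{\frac2n}\exp\!\Big(\tfrac2n\int_\Sigma f^2\log f^2\,d\mathrm{vol}\Big),
\]
which, by $\int_\Sigma f^2\,d\mathrm{vol}=1$, is equivalent to Theorem~\ref{thm1.3}.

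\emph{The transport map.} Following \cite{Brendle3}, set $D:=\int_\Sigma f^2\log f^2\,d\mathrm{vol}$ and solve the weighted equation
\[
\mathrm{div}_\Sigma\!\big(f^2\,D^\Sigma u\big)=f^2\big(\log f^2-D\big)\qquad\text{on }\Sigma ,
\]
which is solvable since its right-hand side has vanishing integral; dividing by $f^2$ gives $\Delta^\Sigma u=\log f^2-D-2\langle D^\Sigma\log f,\,D^\Sigma u\rangle$. For $z\in M$, consider the function $x\mapsto \tfrac12 d(x,z)^2+u(x)$ on the compact submanifold $\Sigma$, let $\bar x=\bar x(z)$ be a point at which it attains its minimum, and set $w:=(\exp_{\bar x}^{-1}z)^{\perp}\in N_{\bar x}\Sigma$. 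The first-order condition shows $z=\Phi(\bar x,w)$, where
\[
\Phi(x,w):=\exp_x\!\big(D^\Sigma u(x)+w\big),\qquad (x,w)\in N\Sigma ,
\]
so that the transport map $\Phi$, defined on the normal bundle of $\Sigma$, covers $M$ up to a null set, and $d(\bar x,z)^2=|D^\Sigma u(\bar x)|^2+|w|^2$. (In the model $M=\mathbb{R}^n=\Sigma$ with $f^2$ a Gaussian density one finds $u$ proportional to $|x|^2$; this is what dictates the Gaussian weight below and how the free dilation parameter must be tuned.)

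\emph{Covering and Jacobian estimates.} I would weight $M$ by the Gaussian $e^{-\phi}$ with $\phi(z):=\min_{x\in\Sigma}\big(\tfrac12 d(x,z)^2+u(x)\big)$. Comparing with a fixed basepoint of $\Sigma$ and using Bishop--Gromov together with Euclidean volume growth gives $\int_M e^{-\phi}\,d\mathrm{vol}\ge c(n,m)\,\theta$ for some explicit $c(n,m)>0$; on the other hand, by the area formula this integral is at most $\int_{A}e^{-\phi(\Phi(x,w))}\,|\det D\Phi(x,w)|\,d(x,w)$, where $A$ is the contact set on which $\phi(\Phi(x,w))=\tfrac12|D^\Sigma u(x)|^2+\tfrac12|w|^2+u(x)$, and by positivity the domain may then be enlarged to all of $N\Sigma$. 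It remains to bound $|\det D\Phi|$ on $A$. Writing $D\Phi=d(\exp_x)_{v}\circ L$ with $v=D^\Sigma u(x)+w$, nonnegative sectional curvature gives $|\det d(\exp_x)_v|\le 1$ by Rauch comparison, while---using the Gauss equation and the Hessian comparison $\mathrm{Hess}\, d(\cdot,z)^2\le 2g$, valid when $\sec\ge 0$---the second-order condition at the minimum forces the tangential block $I-A_w+\mathrm{Hess}^\Sigma u$ of $L$, with $A_w$ the shape operator in direction $w$, to be positive semidefinite. Hence, by the arithmetic--geometric mean inequality and $(1+s/n)_+^n\le e^{s}$,
\[
|\det D\Phi(x,w)|\ \le\ \Big(1-\tfrac1n\langle H,w\rangle+\tfrac1n\Delta^\Sigma u\Big)_+^{\,n}\ \le\ e^{\,\Delta^\Sigma u-\langle H,w\rangle}.
\]
Substituting the equation for $\Delta^\Sigma u$ (which restores a factor $f^2$), integrating the normal variable $w$ against its Gaussian weight (which produces a power of $\pi$ and, after absorbing $|w|^2$ and completing the square in $w$, the mean-curvature term $\tfrac14 f^2|H|^2$), completing the square in $D^\Sigma u$, using $\int_\Sigma f^2\,d\mathrm{vol}=1$ together with convexity of the exponential, and optimizing the dilation parameter, I expect to arrive at the displayed scale-invariant inequality. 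Tracking the constants---and checking that the codimension $m$ cancels, exactly as in \cite{Brendle1}---should give the sharp constant $\tfrac{\pi e n}{2}$, recovering Weissler's value and, for $M=\mathbb{R}^{n+m}$, the inequality of \cite{BK}.

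\emph{Main obstacle.} The hardest step is expected to be the covering estimate in the noncompact ambient manifold: one must show that $\Phi$ reaches almost every point of $M$, that the passage to the area-formula inequality is legitimate, and that $\int_M e^{-\phi}\,d\mathrm{vol}$ is bounded below by a multiple of $\theta$---this is precisely where the noncompact refinement of Brendle's method from \cite{Brendle2, WXZ} is needed. A second delicate point is the Jacobian estimate for the exponential map in mixed tangential--normal directions: because the normal directions probe the ambient sectional curvatures and not merely its Ricci curvature, $\sec\ge0$ cannot be weakened here, and the second fundamental form enters both through the shape operator $A_w$ and through the Gauss equation in the Jacobi-field comparison. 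Finally, the endgame---coordinating the Gaussian integral in $w$, the exponentiation step, and the optimization over the dilation parameter---must be done carefully to extract exactly the constant $\tfrac{\pi e n}{2}$; the equality case, which one expects to force a Gaussian supported on a flat $\mathbb{R}^n$ factor splitting off from $M$, would be treated afterwards.
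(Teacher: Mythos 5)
Your proposal captures the key ideas of the ABP method for submanifolds---a transport map defined on the normal bundle of $\Sigma$, Jacobi-field and Hessian comparison via nonnegative \emph{sectional} curvature, Gaussian integration over the normal directions producing the $\frac14f^2|H|^2$ term, and the asymptotic volume ratio $\theta$ entering through a Bishop--Gromov lower bound---and is therefore essentially the same strategy as the paper. The implementation differs in ways worth noting. The paper follows Brendle's noncompact refinement in \cite{Brendle2} verbatim: it introduces a large radius parameter $r$, replaces $f$ by the positive perturbation $h=c(f+1/r)$ so the weighted PDE is uniformly elliptic up to $\partial\Sigma$, imposes the Neumann condition $\langle D^\Sigma u,\nu\rangle=r/\alpha$ on $\partial\Sigma$, works with the rescaled map $\Phi_r(x,y)=\exp_x(rD^\Sigma u(x)+ry)$, and shows that $\Phi_r(A_r)$ covers only a large ball $\Sigma_r$; then $r\to\infty$ makes $\Sigma_r$ exhaust $M$, kills the boundary integral, and recovers the Gaussian weight after the co-area computation. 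Your proposal instead fixes $r=1$, works directly with the weight $\phi(z)=\min_{x\in\Sigma}(\tfrac12 d(x,z)^2+u(x))$, and asserts that $\Phi$ covers $M$ up to a null set with $\int_M e^{-\phi}\,d\mathrm{vol}\ge c(n,m)\theta$---closer in spirit to Brendle's Euclidean argument in \cite{Brendle3} combined with the noncompact ideas of \cite{WXZ}. That route can be made to work, but two points you wave through are precisely what the $r\to\infty$ mechanism is designed to avoid: (i) your ``standard approximation'' to a \emph{closed} submanifold with a \emph{positive} $f$ is not actually standard---a compact $\Sigma$ with $\partial\Sigma\neq\emptyset$ cannot be approximated by closed ones, and the equation $\mathrm{div}(f^2D^\Sigma u)=f^2(\log f^2-D)$ is degenerate where $f=0$, so the Neumann condition of \eqref{neumann2} (and the regularization by $h=c(f+1/r)$) is really needed to justify solvability and $C^{2,\gamma}$ regularity of $u$, and to ensure that the minimum point $\bar x(z)$ is interior; and (ii) the paper does not need to prove that $\Phi$ is almost-onto $M$---only that it covers the ball $\Sigma_r$---which bypasses cut-locus and surjectivity issues on a noncompact manifold. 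A further cosmetic difference is that the paper builds the parameter $\alpha$, the $|D^\Sigma h|^2$ term, and the $\tfrac14h^2|H|^2$ term directly into the right-hand side of the PDE, so that the completion of squares in Lemma \ref{lem3.3} yields the exact Gaussian weight $e^{-|\sqrt{\alpha}y+H/(2\sqrt{\alpha})|^2}$ in one step, whereas you defer both the dilation parameter and the completion of squares to the end; both bookkeepings lead to $\alpha=\tfrac2n\int_\Sigma|Df|^2+\tfrac14f^2|H|^2$ and the constant $\tfrac{2}{\pi en}$.
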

	
	The ABP method, originating from the work of Alexandrov, Bakelman, and Pucci, is fundamental in PDE theory. Cabr\'e was the first to use the ABP method to provide a simple proof of the classical isoperimetric inequality in $\mathbb{R}^n$ around 1996 (see \cite{cabre2}). Recently, by generalizing the ABP technique developed by Cabré \cite{cabre3}, Brendle \cite{Brendle1} established a sharp isoperimetric inequality for minimal submanifolds in Euclidean space of codimension at most $2$.  For further details regarding the ABP method, we refer the reader to \cite{cabre1, cros, lww, pham, WZ, XZ} and the references therein.
	
	This paper is organized as follows: Section 2 is devoted to the ABP-based proof of Theorem \ref{thm 1.1} and Theorem \ref{thm 1.2}. In Section 3, we give an ABP-based proof of Theorem \ref{thm1.3}.
	\section{The case of domains}
	Throughout this section, we assume that $(M^n,g)$, for $n\ge2$, is a complete noncompact manifold with nonnegative Ricci curvature, and we fix a base point $o \in M$. Let $\Omega\subset M$ be a compact domain with smooth boundary $\partial\Omega$. Let $p\ge1$ and  let $f$ be a smooth nonnegative function on $\Omega$. In the case $p>1$, we assume that $f$ vanishes on $\partial\Omega$.
	
	without loss of generality, we assume hereafter that $\Omega$ is connected. Let $\alpha>0$ be a given parameter, $r_o=\max\{d(o,x):x\in \Omega\}$ and $r>1+\alpha r_0$ is an arbitrary constant. By scaling, we may assume that
	\begin{equation}\label{scale1-1}
	\int_\Omega h^p \log h^p \, d\text{vol}-\frac{1}{\alpha}\int_\Omega |Dh|^p \, d\text{vol}=\frac{r^{p-1}}{\alpha}\int_{\partial\Omega}h^p\, da,
	\end{equation}
	where $h=c(f+1/r)$ for some $c>0$. Since $\Omega$ is connected, we can find a solution of the following Neumann boundary problem
	\begin{equation}\label{neumann-1}
		\left\{
		\begin{aligned}
			&\mathrm{div}(h^pD u)=h^p\log h^p-\frac{1}{\alpha}|D h|^p,& \text{ on }\Omega, \\
			&\langle Du,\nu\rangle=\frac{r^{p-1}}{\alpha},
			& \text{ on }\partial\Omega,
		\end{aligned}
		\right.
	\end{equation}
	where $\nu$ is the outward unit normal vector field. By standard elliptic regularity theory, we know that $u\in C^{2,\gamma}$ for each $0<\gamma<1$.
	
	As in \cite{Brendle2}, we set $U:=\{x \in \Omega\setminus\partial\Omega: |D u(x)|<r^{p-1}/\alpha\}$ and 
	\[
		A_r=\{\bar{x}\in U: ru(x)+\frac{1}{2}{d}
		(x,\exp_{\bar{x}}(rD u(\bar{x})))^2\geq
		ru(\bar{x})+\frac{1}{2}r^2|D u(\bar{x})|^2,\ 
		\forall x\in\Omega\}.
	\]
	Moreover, we define the transport map $\Phi_r:\Omega\to M$ by 
	\[
		\Phi_r(x)=\exp_x(r Du(x)),
	\]
	for all $x \in\Omega$. Note that the map $\Phi_r$ is of class $C^{1,\gamma}$ for each $0<\gamma<1$.
	
	The proofs of the following two lemmas are owed to Brendle, so we omit them here.
	\begin{Lemma}\label{lem2.1}
		The set 
		\[
		\Omega_{r}:=\{\bar{x} \in M: d(x,\bar{x})<r^{p}/\alpha ,\,\forall x \in \Omega\}
		\]
		is contained in $\Phi_r(A_r)$.
	\end{Lemma}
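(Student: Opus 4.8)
The plan is to follow Brendle's ABP scheme and show that any point $\bar x$ in the set $\Omega_r$ is attained as $\Phi_r(x)$ for some $x \in A_r$. Fix $\bar x$ with $d(x,\bar x) < r^p/\alpha$ for all $x \in \Omega$, and consider the function $x \mapsto r u(x) + \tfrac12 d(x,\bar x)^2$ on the compact set $\Omega$. Since this is continuous, it attains its minimum at some $x_0 \in \Omega$. The first step is to rule out $x_0 \in \partial\Omega$: if $x_0$ were a boundary point, the first-order condition together with the Neumann boundary condition $\langle Du, \nu\rangle = r^{p-1}/\alpha$ would force $d(x_0,\bar x) \ge r^p/\alpha$, contradicting the defining property of $\Omega_r$; here one uses that the inward derivative of $\tfrac12 d(\cdot,\bar x)^2$ along $\nu$ is bounded by $d(x_0,\bar x)$. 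So $x_0$ lies in the interior.

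Next, from the interior minimum we get the first-order condition $r\, Du(x_0) + \tfrac12 D_x\big(d(x,\bar x)^2\big)\big|_{x_0} = 0$. The gradient of $\tfrac12 d(\cdot,\bar x)^2$ at an interior point $x_0$ (where it is smooth, provided $x_0$ is not in the cut locus of $\bar x$) equals $-\exp_{x_0}^{-1}(\bar x)$, so this reads $r\,Du(x_0) = \exp_{x_0}^{-1}(\bar x)$, i.e.\ $\Phi_r(x_0) = \exp_{x_0}(r\,Du(x_0)) = \bar x$. One should check the smoothness/cut-locus issue: since $x_0$ minimizes $r u + \tfrac12 d(\cdot,\bar x)^2$ and $u$ is $C^2$, a standard barrier argument (comparing with the smooth upper support function coming from a minimizing geodesic) shows $x_0$ is not in the cut locus of $\bar x$, so the distance function is smooth near $x_0$ and the computation is valid. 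Moreover $|Du(x_0)| = \tfrac1r d(x_0,\bar x) < r^{p-1}/\alpha$, so $x_0 \in U$.

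Finally I must verify $x_0 \in A_r$, i.e.\ that for all $x \in \Omega$,
\[
 r u(x) + \tfrac12 d\big(x, \exp_{x_0}(r\,Du(x_0))\big)^2 \ge r u(x_0) + \tfrac12 r^2 |Du(x_0)|^2.
\]
Since $\exp_{x_0}(r\,Du(x_0)) = \bar x$ and $\tfrac12 r^2|Du(x_0)|^2 = \tfrac12 d(x_0,\bar x)^2$, this is exactly the statement that $x_0$ minimizes $r u(x) + \tfrac12 d(x,\bar x)^2$ over $\Omega$, which is how $x_0$ was chosen. Hence $\bar x = \Phi_r(x_0) \in \Phi_r(A_r)$, proving $\Omega_r \subset \Phi_r(A_r)$.

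The main obstacle is the boundary exclusion step: one needs the precise interplay between the Neumann datum $r^{p-1}/\alpha$, the factor $r$ multiplying $u$, and the bound $r^p/\alpha$ defining $\Omega_r$ to derive the contradiction, and one must handle the one-sided (sub)differential of the squared distance at a possibly non-smooth point on $\partial\Omega$ correctly. Everything else is the standard ABP argument as in \cite{Brendle2}.
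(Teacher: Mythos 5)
Your proof follows exactly the same scheme as Brendle's Lemma~2.2 in \cite{Brendle2}, which the paper simply cites: minimize $x\mapsto ru(x)+\tfrac12 d(x,\bar x)^2$ over $\Omega$, exclude boundary minima via the Neumann condition $\langle Du,\nu\rangle=r^{p-1}/\alpha$ together with the Lipschitz bound on the one-sided derivative of $\tfrac12 d(\cdot,\bar x)^2$, then use the interior first-order condition (made rigorous by comparing with the smooth upper support function $x\mapsto ru(x)+\tfrac12\big(d(x,\bar\gamma(s))+L-s\big)^2$ along a minimizing geodesic $\bar\gamma$ from $x_0$ to $\bar x$) to conclude $rDu(x_0)=L\bar\gamma'(0)$, hence $\Phi_r(x_0)=\bar x$, $x_0\in U$, and $x_0\in A_r$. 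One small imprecision: the barrier comparison directly yields the first-order identity without needing to establish that $x_0$ avoids the cut locus of $\bar x$; the phrase ``shows $x_0$ is not in the cut locus'' overstates what the argument produces and is not actually needed.
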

	
	\begin{proof}
		See Lemma 2.2 in \cite{Brendle2}.
	\end{proof}
	
	\begin{Lemma}\label{lem2.2}
		The Jacobian determinant of $\Phi_r$ satisfies 
		\[
			|\det D\Phi_r(x)|\le\bigg(1+r\frac{\Delta u(x)}{n}\bigg)^n
		\]
		for all $x\in A_r$.
	\end{Lemma}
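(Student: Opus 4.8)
The plan is to run the by-now-standard Jacobian estimate of the ABP method, extracting the curvature information from the second-order condition satisfied at a contact point. Fix $\bar x\in A_r$ and set $q:=\Phi_r(\bar x)=\exp_{\bar x}(r\,Du(\bar x))$. First I would record the consequences of $\bar x\in A_r$: since $d(\bar x,q)=r|Du(\bar x)|$, the point $\bar x$ is an interior minimum over $\Omega$ of $\varphi(x):=r u(x)+\tfrac12 d(x,q)^2$, and, exactly as in \cite{Brendle2}, one checks that $q$ is not in the cut locus of $\bar x$, so that $\varphi$ is $C^2$ near $\bar x$ and the segment $\gamma(t):=\exp_{\bar x}(tr\,Du(\bar x))$, $t\in[0,1]$, is the unique minimizing geodesic from $\bar x$ to $q$. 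The interior minimum condition gives $r\,\mathrm{Hess}\,u(\bar x)+\mathrm{Hess}\big(\tfrac12 d(\cdot,q)^2\big)(\bar x)\ge 0$; taking traces and using the Laplacian comparison theorem ($\mathrm{Ric}\ge0$), namely $\Delta\big(\tfrac12 d(\cdot,q)^2\big)(\bar x)=d(\bar x,q)\,\Delta d(\cdot,q)(\bar x)+1\le n$, I obtain $r\Delta u(\bar x)\ge-n$; in particular $1+\tfrac{r\Delta u(\bar x)}{n}\ge0$, so the right-hand side of the asserted bound is nonnegative.

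Next I would compute $D\Phi_r(\bar x)$ through Jacobi fields along $\gamma$. Writing $\Phi_t(x)=\exp_x(tr\,Du(x))$, the usual variation-of-geodesics identity gives $D\Phi_r(\bar x)w=J_w(1)$, where $J_w$ is the Jacobi field along $\gamma$ with $J_w(0)=w$ and $\tfrac{D}{dt}J_w(0)=r\,\mathrm{Hess}\,u(\bar x)\,w$. Fixing an orthonormal basis of $T_{\bar x}M$, extending it by parallel transport along $\gamma$, and letting $P(t)$ be the matrix of $J_{e_1}(t),\dots,J_{e_n}(t)$ in this frame, one has $P(0)=\mathrm{Id}$, $P'(0)=r\,\mathrm{Hess}\,u(\bar x)$, and $\det D\Phi_r(\bar x)=\det P(1)$ after the parallel-transport identification $T_qM\cong T_{\bar x}M$. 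Constancy of the Jacobi Wronskian together with the symmetry of $\mathrm{Hess}\,u(\bar x)$ shows that $\widetilde Q(t):=P(t)^{-1}P'(t)$ is symmetric wherever $P(t)$ is invertible, and — again as in \cite{Brendle2}, and this is where $\bar x\in A_r$ is genuinely used — $P(t)$ stays invertible on $[0,1)$ (no conjugate point of $\bar x$ occurs before $q$), so $\det P(t)>0$ there and $\det P(1)=\lim_{t\to1}\det P(t)\ge0$.

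The computational heart is then a Riccati comparison. On $[0,1)$ put $\psi(t):=\log\det P(t)$, so $\psi(0)=0$ and $\psi'(0)=\mathrm{tr}\,\widetilde Q(0)=r\Delta u(\bar x)$. The Jacobi equation $P''+PR=0$, with $R(t)$ the symmetric curvature endomorphism $v\mapsto\mathcal R(v,\gamma'(t))\gamma'(t)$, yields $\widetilde Q'+\widetilde Q^2+R=0$; taking traces and using $\mathrm{tr}(\widetilde Q^2)\ge\tfrac1n(\mathrm{tr}\,\widetilde Q)^2$ and $\mathrm{tr}\,R=\mathrm{Ric}(\gamma',\gamma')\ge0$ gives $\psi''\le-\tfrac1n(\psi')^2$. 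Since $\psi'(0)=r\Delta u(\bar x)\ge-n$, comparison with the solution $\tilde\phi(t)=\dfrac{n\,r\Delta u(\bar x)}{n+t\,r\Delta u(\bar x)}$ of $\tilde\phi'=-\tfrac1n\tilde\phi^2$, $\tilde\phi(0)=r\Delta u(\bar x)$ — which is finite on $[0,1]$ — gives $\psi'\le\tilde\phi$ on $[0,1)$, hence $\psi(t)\le\int_0^t\tilde\phi=n\log\frac{n+t\,r\Delta u(\bar x)}{n}$. Letting $t\to1$ yields $\det P(1)\le\big(1+\tfrac{r\Delta u(\bar x)}{n}\big)^n$, i.e. $|\det D\Phi_r(\bar x)|\le\big(1+\tfrac{r\Delta u(\bar x)}{n}\big)^n$, which is the assertion; the degenerate case $r\Delta u(\bar x)=-n$ is covered by the same limit, the bound then forcing $\det P(1)=0$.

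I expect the main obstacle to be not the Riccati computation, which is routine once set up, but the two regularity inputs imported from \cite{Brendle2}: that for $\bar x\in A_r$ the point $q$ lies outside the cut locus of $\bar x$, so that $d(\cdot,q)^2$ is genuinely $C^2$ near $\bar x$ and the second-order test is legitimate; and that the Jacobi matrix $P(t)$ does not degenerate on $[0,1)$. Both rely on the precise definition of $A_r$ together with quantitative control of the relevant geodesic lengths (coming from $\bar x\in U$ and the choice $r>1+\alpha r_o$), and this is exactly the place where the global hypotheses — completeness, nonnegative Ricci curvature, and Euclidean volume growth — enter the argument.
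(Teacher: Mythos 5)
Your argument is correct and is essentially a reconstruction of Brendle's own proof of Corollary 2.6 in \cite{Brendle2}, which the paper cites without reproducing: the interior-minimum/Laplacian-comparison step giving $r\Delta u(\bar x)\ge -n$, the Jacobi-matrix setup, and the scalar Riccati (equivalently, concavity of $(\det P)^{1/n}$) comparison are exactly the standard Bishop--Gromov-style Jacobian bound used there. One small slip worth noting: with the Jacobi equation $P''+RP=0$, the symmetric Riccati variable is $P'P^{-1}$, not $P^{-1}P'$ (the Wronskian identity $(P')^{\top}P=P^{\top}P'$ yields symmetry of the former); since the two are conjugate, $\operatorname{tr}$ and $\operatorname{tr}$ of the square are unchanged, so your trace computation $\psi''\le-\tfrac1n(\psi')^2$ is still valid, but the stated symmetry claim should be adjusted.
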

	
	\begin{proof}
		See Corollary 2.6 in \cite{Brendle2}.
	\end{proof}
	
	\begin{Lemma}\label{lem2.3}
		The Jacobian determinant of $\Phi_r$ satisfies 
		\[
			\left\{
			\begin{aligned}
				&|\det D\Phi_r(x)|\le r^ne^{\frac{n}{r}-n} h(x), &p=1,\\
				&e^{-\frac{p}{q}\alpha^{\frac{q}{p}}\frac{d(x,\Phi_r(x))^q}{r^q}}	|\det D\Phi_r(x)|\le r^ne^{\frac{n}{r}-n} h(x)^p, &p>1,
			\end{aligned}	
			\right.		
		\]
		for all $x\in A_r$.
	\end{Lemma}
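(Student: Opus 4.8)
The plan is to feed the equation \eqref{neumann-1} into the Jacobian bound of Lemma \ref{lem2.2} and then reduce everything to a single elementary one-variable inequality. I would begin by recording two facts valid at every $x\in A_r$. First, evaluating the inequality defining $A_r$ at $y=x$ yields $d(x,\Phi_r(x))\ge r|Du(x)|$, while the geodesic $t\mapsto\exp_x(trDu(x))$, $t\in[0,1]$, joins $x$ to $\Phi_r(x)$ with length $r|Du(x)|$, so in fact $d(x,\Phi_r(x))=r|Du(x)|$, equivalently $\tfrac{d(x,\Phi_r(x))^q}{r^q}=|Du(x)|^q$. Second, since $\mathrm{Ric}\ge0$ gives $\Delta\bigl(\tfrac12 d(\cdot,\Phi_r(x))^2\bigr)\le n$ by the Laplacian comparison theorem, the fact that $y\mapsto ru(y)+\tfrac12 d(y,\Phi_r(x))^2$ attains its minimum over $\Omega$ at the interior point $y=x$ forces $r\Delta u(x)\ge -n$, i.e. $1+\tfrac{r\Delta u(x)}{n}\ge0$; so Lemma \ref{lem2.2} gives $|\det D\Phi_r(x)|\le\bigl(1+\tfrac{r\Delta u(x)}{n}\bigr)^n$ with a nonnegative base, and passing to $n$-th powers below will be legitimate.

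Next I would expand the divergence in \eqref{neumann-1} and divide by $h^p$ (which is positive since $h\ge c/r>0$) to obtain the pointwise identity $\Delta u=\log h^p-\tfrac{p}{h}\langle Dh,Du\rangle-\tfrac{1}{\alpha h^p}|Dh|^p$. The two gradient terms are then controlled by Young's inequality. For $p>1$, Cauchy--Schwarz gives $-\tfrac ph\langle Dh,Du\rangle\le\tfrac ph|Dh|\,|Du|$, and maximizing the function $t\mapsto p\,t\,|Du|-\tfrac1\alpha t^p$ over $t=|Dh|/h\ge0$ produces exactly $-\tfrac ph\langle Dh,Du\rangle-\tfrac{1}{\alpha h^p}|Dh|^p\le\tfrac pq\alpha^{q/p}|Du|^q$; this is where the conjugate exponent $q$ and the constant $\tfrac pq\alpha^{q/p}$ appear. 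For $p=1$ one instead uses $|Du|<r^{p-1}/\alpha=1/\alpha$ on $U\supseteq A_r$, which already makes $-\tfrac1h\langle Dh,Du\rangle-\tfrac{1}{\alpha h}|Dh|\le\tfrac{|Dh|}{h}\bigl(|Du|-\tfrac1\alpha\bigr)\le0$. Writing $w:=\tfrac pq\alpha^{q/p}|Du|^q$ (with the convention $w=0$ when $p=1$) and $H:=h^p e^{w}$, both cases give $r\Delta u\le r\log H$, hence $0\le 1+\tfrac{r\Delta u}{n}\le 1+\tfrac rn\log H$.

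It then remains to establish the elementary inequality: whenever $1+\tfrac rn\log H\ge0$, one has $\bigl(1+\tfrac rn\log H\bigr)^n\le r^n e^{\frac nr-n}H$. Putting $\sigma=H^{1/n}$ and taking $n$-th roots, this is $1+r\log\sigma\le r e^{1/r-1}\sigma$ for $\sigma>0$, which holds because $\sigma\mapsto r e^{1/r-1}\sigma-r\log\sigma-1$ is strictly convex with a unique critical point at $\sigma=e^{1-1/r}$, where it vanishes. Combining this with the previous two paragraphs and substituting $w=\tfrac pq\alpha^{q/p}\tfrac{d(x,\Phi_r(x))^q}{r^q}$ yields, on $A_r$,
\[
e^{-\frac pq\alpha^{q/p}\frac{d(x,\Phi_r(x))^q}{r^q}}\,|\det D\Phi_r(x)|\le r^n e^{\frac nr-n}h(x)^p,
\]
and the $p=1$ assertion is the special case $w\equiv0$. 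I do not expect a genuine obstacle, since Lemma \ref{lem2.2} already supplies the curvature input; the points needing care are the bookkeeping in the Young step so that the constant comes out to be precisely $\tfrac pq\alpha^{q/p}$, the check that the sharp elementary inequality yields exactly the factor $e^{\frac nr-n}$, and the verification that $1+\tfrac{r\Delta u}{n}\ge0$ on $A_r$ so that raising both sides to the $n$-th power preserves the inequality.
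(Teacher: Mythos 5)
Your argument is correct and follows the same route as the paper: expand the divergence from \eqref{neumann-1}, bound $\Delta u$ by $\log h^p$ (for $p=1$) or $\log h^p+\tfrac{p}{q}\alpha^{q/p}|Du|^q$ (for $p>1$) via Cauchy--Schwarz/Young, feed this into Lemma \ref{lem2.2}, apply the elementary exponential bound, and convert $|Du|$ to $d(x,\Phi_r(x))/r$ using the defining property of $A_r$. Your extra checks --- that $d(x,\Phi_r(x))=r|Du(x)|$ on $A_r$ and that $1+\tfrac{r\Delta u}{n}\ge 0$ --- are details the paper leaves implicit (they are built into Brendle's Corollary 2.6), and your one-shot inequality $1+r\log\sigma\le re^{1/r-1}\sigma$ is equivalent to the paper's two-step use of $\lambda\le e^{\lambda-1}$ followed by substituting the bound on $\Delta u$.
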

	
	\begin{proof}
		By (\ref{neumann-1}), we have 
		\[
			\begin{aligned}
			\mathrm{div}(h^pD u)&=h^p\Delta u+p\langle h^{p-1} Dh,Du\rangle\\
			& =h^p\log h^p-\frac{1}{\alpha}|D h|^p.
			\end{aligned}
		\]
		In the case $p=1$, using $|Du(x)|<\frac{1}{\alpha}$ and the Cauchy-Schwarz inequality, one has 
		\[
			\begin{aligned}
			\Delta u&=\log h-\frac{ |D h|}{\alpha h}-\frac{\langle Dh,Du\rangle}{h}\\
			&\le \log h.
			\end{aligned}
		\]
		By Lemma \ref{lem2.2} and the elementary inequality $\lambda\le e^{\lambda-1}$, it follows that
		\[
			\begin{aligned}
			|\det D\Phi_r(x)|&\le \bigg(\frac{1}{r}+\frac{\Delta u(x)}{n}\bigg)^n r^n\\
			&\le r^n e^{\frac{n}{r}+\Delta u(x)-n}\\
			&=r^n e^{\frac{n}{r}-n} h(x).
			\end{aligned}
		\]
		For $p>1$, through the Young's inequality, one has 
		\[
			\begin{aligned}
			\Delta u&=\log h^p-\frac{|Dh|^p}{\alpha h^p}-p\frac{\langle Dh,Du\rangle}{h}\\
			&\le \log h^p+\frac{p}{q}\alpha^{\frac{q}{p}}|Du|^q.
			\end{aligned}
		\]
		Similarly, one can get
		\[
			\begin{aligned}
			|\det D\Phi_r(x)|&\le \bigg(\frac{1}{r}+\frac{\Delta u(x)}{n}\bigg)^n r^n\\
			&\le r^n e^{\frac{n}{r}+\Delta u(x)-n}\\
			&\le r^n e^{\frac{n}{r}-n} e^{\frac{p}{q}\alpha^{\frac{q}{p}}|Du(x)|^q}h(x)^p,
			\end{aligned}
		\]
		which implies 
		\[
			e^{-\frac{p}{q}\alpha^{\frac{q}{p}}\frac{d(x,\Phi_r(x))^q}{r^q}}	|\det D\Phi_r(x)|\le r^ne^{\frac{n}{r}-n} h(x)^p.
		\]
		
	\end{proof}
	
	\begin{proof}[\textbf{Proof of Theorem \ref{thm 1.1}}]
		Let $p=1$. Using the triangle inequality, Lemma \ref{lem2.1} and Lemma \ref{lem2.3}, the formula for change of variables in multiple integrals implies 
		\[
			\begin{aligned}
					\theta\omega_n\Big(\frac{r}{\alpha}-r_o\Big)^n&\le\Big|B_o\Big(\frac{r}{\alpha}-r_o\Big)\Big|\\
					&\le|\{\bar{x} \in M: d(x,\bar{x})<r/\alpha,\,\forall x \in \Omega\}|\\
					&\le\int_{A_r}|D\Phi_r(x)|\, d\text{vol}(x)\\
					&\le r^ne^{\frac{n}{r}-n}\int_\Omega h(x)\, d\text{vol}(x)
			\end{aligned}
		\]
		for all $r>1+\alpha r_o$. Dividing the above formula by $r^n$, we obtain
		\[
			\theta\omega_n(\alpha^{-1}-r_o/r)^{n}\le e^{\frac nr-n}\int_\Omega h \, d\text{vol},
		\]
		which implies
		\[
			n- n/r+\log(\theta\omega_n(\alpha^{-1}-r_o/r)^{n})\le\log\bigg(\int_\Omega h \, d\text{vol}\bigg).
		\]
		Combining this inequality with (\ref{scale1-1}) yields the following scale-invariant inequality:
		\begin{equation*}
			\begin{aligned}
				&\int_\Omega h(\log h+n-n/r+\log(\theta\omega_n(\alpha^{-1}-r_o/r)^{n}))\, d\text{vol}-\frac{1}{\alpha}\int_\Omega |Dh|\, d\text{vol}\\
				&\le\bigg(\int_\Omega h\,d\mathrm{vol}\bigg)\log\bigg(\int_\Omega h\,d\mathrm{vol}\bigg)+\frac{1}{\alpha}\int_{\partial\Omega}h \, da.
			\end{aligned}
		\end{equation*}
		Taking  $h=f+\frac1r$ with $r\to\infty$, one can get 
		\begin{equation}
			\begin{aligned}
				&\int_\Omega f(\log f+n+\log(\theta\omega_n\alpha^{-n}))\, d\text{vol}-\frac{1}{\alpha}\int_\Omega |Df|\, d\text{vol}\\
				&\le\bigg(\int_\Omega f\,d\mathrm{vol}\bigg)\log\bigg(\int_\Omega f\,d\mathrm{vol}\bigg)+\frac{1}{\alpha}\int_{\partial\Omega}f \, da.
			\end{aligned}
		\end{equation}
		Let $\int_\Omega f \, d\text{vol}=1$, we have
		\[
			\int_\Omega f\log f \, d\text{vol}\le-n-\log(\theta\omega_n\alpha^{-n})+\frac{1}{\alpha}\bigg(\int_\Omega |Df|\, d\text{vol}+\int_{\partial\Omega}f \, da\bigg).
		\]
		By minimizing the right hand side with respect to $\alpha>0$, one has
		\[
			\alpha=\frac{1}{n}\bigg(\int_\Omega |Df|\, d\text{vol}+\int_{\partial\Omega}f \, da\bigg).
		\]
		This implies 
		\[
			\int_\Omega f\log f \, d\text{vol}\le n\log\bigg(n^{-1}\omega_n^{-\frac1n}\theta^{-\frac1n}\bigg(\int_\Omega |Df|\, d\text{vol}+\int_{\partial\Omega}f \, da\bigg)\bigg).
		\]
	\end{proof}
	
	\begin{proof}[\textbf{Proof of Theorem \ref{thm 1.2}}]
		Let $p>1$. By the triangle inequality, Lemma \ref{lem2.1} and Lemma \ref{lem2.3}, we find
		\begin{align*}
			&\int_{\Omega_{r}} e^{-\frac{p}{q} \alpha^\frac{q}{p}\frac{(r_o+d(o,\bar{x}))^q}{r^q}} \,d\mathrm{vol}(\bar{x})\\
			&\le\int_{\Omega} e^{-\frac{p}{q} \alpha^\frac{q}{p}\frac{d(x,\Phi_r(x))^q}{r^q}}|\det D\Phi_r(x)|1_{A_r}(x) \,d\mathrm{vol}(x)\\
			&\le r^ne^{\frac{n}{r}-n}\int_\Omega h(x)^p\,d\mathrm{vol}(x).
		\end{align*}
		Note that $B_o\Big(\frac{r^p}{\alpha}-r_o\Big)\subset\Omega_{r}$, the co-area formula implies 
		\[
			r^{-n}e^{n-\frac{n}{r}}\int_0^{\frac{r^p}{\alpha}-r_o} e^{-\frac{p}{q} \alpha^\frac{q}{p}\frac{(r_o+\rho)^q}{r^q}} n\omega_n \theta\rho^{n-1}\,d\rho\le \int_\Omega h(x)^p\,d\mathrm{vol}(x).
		\]
		Combining this inequality with (\ref{scale1-1}) and letting $h=f+\frac1r$ as $r\to\infty$, it follows from (A.1) in \cite{BK} that 
		\begin{equation}
			\begin{aligned}
				&\int_\Omega f^p\bigg(\log f^p+n+\log\theta-\frac nq\log\bigg(\frac{p}{q}\alpha^{\frac{q}{p}}\bigg)+\log\bigg(\omega_n\Gamma\bigg(\frac{n}{q}+1\bigg) \bigg)\bigg)\,d\mathrm{vol}\\
				&\le \frac1\alpha\int_\Omega|Df|^p\,d\mathrm{vol}+\bigg(	\int_\Omega f^p\,d\mathrm{vol}\bigg)\log\bigg(\int_\Omega f^p\,d\mathrm{vol}\bigg).
			\end{aligned}
		\end{equation}
		Let $\int_\Omega f^p \, d\text{vol}=1$, we have
		\begin{align*}
			&\int_\Omega f^p\log f^p\,d\mathrm{vol}\\
			&\le -n-\log\theta+\frac nq\log\Big(\frac{p}{q}\alpha^{\frac{q}{p}}\Big)-\log\Big(\omega_n\Gamma\Big(\frac{n}{q}+1\Big) \Big)\bigg)+\frac1\alpha\int_\Omega|Df|^p\,d\mathrm{vol}.
		\end{align*}
		Minimizing the right-hand side with respect to $\alpha>0$ yields 
		\[
			\alpha=\frac pn\int_\Omega|Df|^p\,d\mathrm{vol},
		\]
		which implies 
		\[
			\int_\Omega f^p\log f^p\,d\mathrm{vol}\le\frac{n}{p}\log\bigg(\mathrm{L}(n,p)\theta^{-\frac{p}{n}}\int_\Omega |Df|^p\, d\mathrm{vol}\bigg).
		\]

	\end{proof}
	
	\section{The case of submanifolds}
	In this section, we assume that $(M^{n+m},g)$, with $n\ge2$ and $m\ge1$, is a complete noncompact manifold with nonnegative sectional curvature, and we fix a base point $o \in M$. Let $\Sigma\subset M$ be a compact submanifold of dimension $n$, possibly with boundary. Let $p=2$ and let $f$ be a nonnegative smooth function defined on $\Sigma$ that vanishes on $\partial\Sigma$. Let $\bar{D}$ denote the the Levi-Civita connection of $M$, and let $D^\Sigma$ denote the induced connection on $\Sigma$. The second fundamental form $B$ of $\Sigma$ is defined by 
	\[
		\langle B(X,Y),V\rangle=\langle \bar{D}_XY,V\rangle,
	\]
	where $X$ and $Y$ are tangent vector fields on $\Sigma$, and $V$ is a normal vector field along $\Sigma$. In particular, the mean curvature vector $H$ is defined as the trace of the second fundamental form $B$.
	
	It suffices to prove Theorem \ref{thm1.3} for the case where $\Sigma$ is connected. Let $\alpha>0$ be a given parameter, $r_o=\max\{d(o,x):x\in \Sigma\}$ and $r>1+\alpha r_0$ is an arbitrary constant. By scaling, we can assume that 
	\begin{equation}\label{scale2}
		\int_\Sigma h^2 \log h^2 \, d\mathrm{vol}-\frac{1}{\alpha}\int_\Sigma |D^\Sigma h|^2+\frac{1}{4}h^2|H|^2 \, d\mathrm{vol}=\frac{r}{\alpha}\int_{\partial\Sigma}h^2\, da,
	\end{equation}
	where $h=c(f+1/r)$ for some $c>0$. By the connectedness of $\Sigma$, there exists a solution of the following Neumann boundary problem
	\begin{equation}\label{neumann2}
		\left\{
		\begin{aligned}
			&\mathrm{div}(h^2D^\Sigma u)=h^2\log h^2-\frac{1}{\alpha}|D^\Sigma h|^2-\frac{1}{4\alpha}h^2|H|^2,& \text{ on }\Sigma, \\
			&\langle D^\Sigma u,\nu\rangle=\frac{r}{\alpha},
			& \text{ on }\partial\Sigma,
		\end{aligned}
		\right.
	\end{equation}
	where $\nu$ is the outward unit normal vector field of $\partial\Sigma$ with respect to $\Sigma$. By standard elliptic regularity theory, we know that $u\in C^{2,\gamma}$ for each $0<\gamma<1$.
	
	As in \cite{Brendle2}, we set $U:=\{x \in \Sigma\setminus\partial\Sigma: |D u(x)|<r/\alpha\}$ and 
	\[
		E:=\{(x,y):x\in U,y\in T^\perp_x\Sigma,|D^\Sigma u(x)|^2+|y|^2<r^2/\alpha^2\}.
	\]
	Let $A_r$ be the set of all points $(\bar{x},\bar{y})\in E$ satisfying 
	\[
	ru(x)+\frac{1}{2}{d}(x,\exp_{\bar{x}}(rD^\Sigma u(\bar{x})+r\bar{y}))^2\geq
	ru(\bar{x})+\frac{1}{2}r^2(|D^\Sigma u(\bar{x})|^2+|\bar{y}|^2)
	\]
	for all $x\in\Sigma$. Moreover, we define the transport map $\Phi_r:T^\perp\Sigma\to M$ by 
	\[
	\Phi_r(x,y)=\exp_x(r D^\Sigma u(x)+ry)
	\]
	for all $x \in\Sigma$ and $y\in T^\perp_x\Sigma$. Note that the map $\Phi_r$ is of class $C^{1,\gamma}$ for each $0<\gamma<1$.
	
	The proofs of the following two lemmas are owed to Brendle, so we omit them here.
	
	\begin{Lemma}\label{lem3.1}
		The set 
		\[
			\Sigma_r:=\{\xi\in M: d(x,\xi)<r^2/\alpha \text{ for all } x\in\Sigma\}
		\]
		is contained in $\Phi_r(A_r)$.
	\end{Lemma}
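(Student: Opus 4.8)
The plan is to adapt Brendle's contact-set argument (Lemma~2.2 in \cite{Brendle2}, cf.\ Lemma~\ref{lem2.1}) to the normal bundle of $\Sigma$. Fix $\xi\in\Sigma_r$ and consider the function $g:\Sigma\to\mathbb{R}$ given by $g(x)=ru(x)+\tfrac12 d(x,\xi)^2$. Since $\Sigma$ is compact, $g$ attains its minimum at some $\bar x\in\Sigma$. The first step is to show $\bar x$ lies in the interior $\Sigma\setminus\partial\Sigma$: if $\bar x\in\partial\Sigma$, then the one-sided derivative of $g$ at $\bar x$ in the inward direction $-\nu$ is nonnegative by minimality; but the Neumann condition in (\ref{neumann2}) makes the contribution of $ru$ equal to $-r^2/\alpha$, while the contribution of $\tfrac12 d(\cdot,\xi)^2$ is at most $d(\bar x,\xi)$ (as $d(\cdot,\xi)$ is $1$-Lipschitz), and $d(\bar x,\xi)<r^2/\alpha$ because $\xi\in\Sigma_r$. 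The resulting strict inequality $D_{-\nu}g(\bar x)<0$ is a contradiction.

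Next I would note that $g$ is differentiable at $\bar x$. The function $\tfrac12 d(\cdot,\xi)^2$ is semiconcave, hence so is $g$; and a semiconcave function cannot attain an interior local minimum at a point where it fails to be differentiable (at such a point its superdifferential, which is nonempty, would be forced to reduce to a single vector). In particular $\bar x\notin\mathrm{Cut}(\xi)$, so there is a unique unit-speed minimizing geodesic $\gamma:[0,\ell]\to M$ from $\bar x$ to $\xi$ with $\ell=d(\bar x,\xi)$, and $\tfrac12 d(\cdot,\xi)^2$ is smooth near $\bar x$ with ambient gradient $-\ell\,\gamma'(0)$ there. Projecting the first-order condition $D^\Sigma g(\bar x)=0$ onto $T_{\bar x}\Sigma$ gives $r\,D^\Sigma u(\bar x)=\ell\,\pi_{T_{\bar x}\Sigma}\gamma'(0)$. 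Setting $\bar y:=\tfrac{\ell}{r}\,\pi_{T^\perp_{\bar x}\Sigma}\gamma'(0)\in T^\perp_{\bar x}\Sigma$, the orthogonal splitting $T_{\bar x}M=T_{\bar x}\Sigma\oplus T^\perp_{\bar x}\Sigma$ yields $r\,D^\Sigma u(\bar x)+r\bar y=\ell\,\gamma'(0)$, so $\Phi_r(\bar x,\bar y)=\exp_{\bar x}(\ell\,\gamma'(0))=\gamma(\ell)=\xi$.

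Finally I would check $(\bar x,\bar y)\in A_r$. Since $|\gamma'(0)|=1$, the same orthogonal splitting gives $|D^\Sigma u(\bar x)|^2+|\bar y|^2=\ell^2/r^2=d(\bar x,\xi)^2/r^2<r^2/\alpha^2$, using $\xi\in\Sigma_r$; this places $(\bar x,\bar y)$ in $E$ (and in particular $\bar x\in U$). With $\Phi_r(\bar x,\bar y)=\xi$ and $r^2(|D^\Sigma u(\bar x)|^2+|\bar y|^2)=d(\bar x,\xi)^2$, the defining inequality of $A_r$ at $(\bar x,\bar y)$ becomes $ru(x)+\tfrac12 d(x,\xi)^2\ge ru(\bar x)+\tfrac12 d(\bar x,\xi)^2$ for all $x\in\Sigma$, i.e.\ $g(x)\ge g(\bar x)$, which holds by the choice of $\bar x$. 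Hence $\xi=\Phi_r(\bar x,\bar y)\in\Phi_r(A_r)$, and since $\xi\in\Sigma_r$ was arbitrary, $\Sigma_r\subset\Phi_r(A_r)$.

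The routine parts are the bookkeeping with the splitting $T_{\bar x}M=T_{\bar x}\Sigma\oplus T^\perp_{\bar x}\Sigma$ and the verification of the $A_r$-inequality, both of which collapse immediately to the minimality of $\bar x$. The main obstacle is the regularity analysis at $\bar x$: ruling out $\bar x\in\partial\Sigma$ (which is exactly where the matching of constants $\langle D^\Sigma u,\nu\rangle=r/\alpha$ versus $d(\cdot,\xi)<r^2/\alpha$ enters), and handling the non-smoothness of $d(\cdot,\xi)^2$ along the cut locus, dealt with above via semiconcavity or, alternatively, by the support-function argument in \cite{Brendle2}.
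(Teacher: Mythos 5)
Your argument follows essentially the same contact-set strategy as the cited Lemma~4.2 of \cite{Brendle2} (the paper omits the proof and simply refers to Brendle), and the structure — locate the minimum of $g(x)=ru(x)+\tfrac12 d(x,\xi)^2$ on $\Sigma$, rule out boundary minima via the Neumann condition, extract a minimizing geodesic, decompose its initial velocity tangentially/normally to define $\bar y$, and verify the $A_r$-inequality — is correct and complete.

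One precision issue worth flagging: the claim ``in particular $\bar x\notin\mathrm{Cut}(\xi)$'' does not follow from the differentiability of $g$ as a function on $\Sigma$. Semiconcavity plus an interior minimum does force $g|_\Sigma$ (hence $\tfrac12 d(\cdot,\xi)^2|_\Sigma$) to be differentiable at $\bar x$, but that only pins down the \emph{tangential} projection of the ambient superdifferential; two distinct minimizing geodesics from $\bar x$ to $\xi$ could still exist whose initial velocities share the same projection onto $T_{\bar x}\Sigma$ and differ only in the normal directions, in which case $\bar x\in\mathrm{Cut}(\xi)$. Fortunately this does not affect your proof: for \emph{any} minimizing unit-speed geodesic $\gamma$ from $\bar x$ to $\xi$, the function $x\mapsto\tfrac12 d(x,\xi)^2$ admits a smooth upper support along $\Sigma$ at $\bar x$ (via the comparison path obtained by concatenating with $\gamma$), and minimality of $g$ then yields $rD^\Sigma u(\bar x)=\ell\,\pi_{T_{\bar x}\Sigma}\gamma'(0)$ for \emph{that} $\gamma$, after which your choice of $\bar y$, the computation $|D^\Sigma u(\bar x)|^2+|\bar y|^2=\ell^2/r^2<r^2/\alpha^2$, and the verification that $(\bar x,\bar y)\in A_r$ all proceed unchanged. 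You already gesture at this with the phrase ``or, alternatively, by the support-function argument in \cite{Brendle2}''; that alternative should simply replace the cut-locus claim rather than supplement it.

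Also, for completeness, the degenerate case $\bar x=\xi$ (so $\ell=0$, $D^\Sigma u(\bar x)=0$, $\bar y=0$) should be noted, though it is trivial.
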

	
	\begin{proof}
		See Lemma 4.2 in \cite{Brendle2}.
	\end{proof}
	
	\begin{Lemma}\label{lem3.2}
		The Jacobian determinant of $\Phi_r$ satisfies 
		\[
			|\det D\Phi_r(x,y)|\le r^{n+m}\bigg(\frac1r+\frac{\Delta_\Sigma u(x)-\langle H(x),y\rangle}{n}\bigg)^n
		\]
		for all $(x,y)\in A_r$.
	\end{Lemma}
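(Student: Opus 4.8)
The plan is to adapt Brendle's computation in \cite{Brendle2} to the normal bundle of $\Sigma$. Fix $(\bar x,\bar y)\in A_r$, set $p=\Phi_r(\bar x,\bar y)$, and let $\gamma\colon[0,1]\to M$ be the geodesic $\gamma(t)=\exp_{\bar x}\!\big(t(rD^\Sigma u(\bar x)+r\bar y)\big)$, so that $\gamma(0)=\bar x$ and $\gamma(1)=p$. Choose a local orthonormal frame $e_1,\dots,e_n$ for $T\Sigma$ that is geodesic at $\bar x$ and a local orthonormal frame $e_{n+1},\dots,e_{n+m}$ for $T^\perp\Sigma$ that is parallel at $\bar x$ for the normal connection; together these give coordinates on the total space of $T^\perp\Sigma$ near $(\bar x,\bar y)$. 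Differentiating $\Phi_r$ in each coordinate direction yields a Jacobi field $J_a$ along $\gamma$ with $D\Phi_r(\bar x,\bar y)\,e_a=J_a(1)$, and the Gauss and Weingarten formulas give the initial data $J_i(0)=e_i$, $J_i'(0)=r\big(D^\Sigma_{e_i}D^\Sigma u(\bar x)-A_{\bar y}e_i\big)+rB(e_i,D^\Sigma u(\bar x))$ for $1\le i\le n$, and $J_\beta(0)=0$, $J_\beta'(0)=re_\beta$ for $n+1\le\beta\le n+m$, where $A_{\bar y}$ is the shape operator of $\Sigma$ in the direction $\bar y$, so that $\mathrm{tr}\,A_{\bar y}=\langle H(\bar x),\bar y\rangle$. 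Hence $|\det D\Phi_r(\bar x,\bar y)|$ equals the volume of the parallelepiped spanned by $J_1(1),\dots,J_{n+m}(1)$ in $T_pM$.

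Next I would read off the pointwise inequality encoded in membership in $A_r$. By definition, $x\mapsto ru(x)+\tfrac12 d(x,p)^2$ attains its minimum over $\Sigma$ at $\bar x$, so its intrinsic Hessian on $T_{\bar x}\Sigma$ is positive semidefinite. The Gauss formula writes that Hessian as $\mathrm{Hess}_M\!\big(\tfrac12 d(\cdot,p)^2\big)+\langle B(\cdot,\cdot),-\gamma'(0)\rangle$; since $B$ takes values in $T^\perp\Sigma$, only the normal component $-r\bar y$ of $\gamma'(0)$ contributes, giving the term $-r\langle B(e_i,e_j),\bar y\rangle=-r\langle A_{\bar y}e_i,e_j\rangle$. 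Using the Hessian comparison $\mathrm{Hess}_M\!\big(\tfrac12 d(\cdot,p)^2\big)(\bar x)\le g$, valid because $\gamma$ is minimizing and the ambient sectional curvature is nonnegative, I obtain
\[
\tfrac1r\,g+\big(\mathrm{Hess}_\Sigma u(\bar x)-A_{\bar y}\big)\ \ge\ 0\quad\text{on }T_{\bar x}\Sigma,
\]
an $n\times n$ positive semidefinite form whose trace is $\tfrac nr+\Delta_\Sigma u(\bar x)-\langle H(\bar x),\bar y\rangle$.

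Then I would carry out the Jacobian comparison. The map $w\mapsto J(1)$, where $J$ is the Jacobi field along $\gamma$ with $J(0)=0$ and $J'(0)=w$, equals $d(\exp_{\bar x})_{\gamma'(0)}$, which in nonnegative sectional curvature is norm-nonincreasing by the Rauch comparison theorem; hence the vertical block satisfies $\mathrm{vol}\big(J_{n+1}(1),\dots,J_{n+m}(1)\big)\le r^m$. For the full parallelepiped one passes to the matrix $P(t)$ of $J_1,\dots,J_n,\tfrac1t J_{n+1},\dots,\tfrac1t J_{n+m}$ in a parallel orthonormal frame along $\gamma$: the $\tfrac1t$-renormalization desingularizes $P$ at $t=0$, with $\det P(0)=r^m$ and $\det P(1)=\pm\det D\Phi_r(\bar x,\bar y)$, and the minimality built into $A_r$ keeps $\gamma|_{[0,1]}$ free of interior conjugate points, so $S:=P'P^{-1}$ is a symmetric solution of the matrix Riccati equation $S'+S^2+\mathcal R=0$ with $\mathcal R\ge0$. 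Running this comparison as in \cite{Brendle2} shows that nonnegative sectional curvature only decreases the volume relative to the Euclidean model in which the Jacobi fields are affine; in that model $\widehat J_\beta(1)=re_\beta$ and $\widehat J_i(1)=e_i+r(D^\Sigma_{e_i}D^\Sigma u-A_{\bar y}e_i)+rB(e_i,D^\Sigma u)$, so the model volume factors (the $B$-terms being normal, hence annihilated upon projecting off the vertical block) as $r^m\det\!\big(I_n+r(\mathrm{Hess}_\Sigma u(\bar x)-A_{\bar y})\big)$. By the previous paragraph this $n\times n$ matrix is positive semidefinite, so the AM--GM inequality $\det\le(\tfrac1n\mathrm{tr})^n$ yields
\[
|\det D\Phi_r(\bar x,\bar y)|\le r^m\Big(1+\tfrac rn\big(\Delta_\Sigma u(\bar x)-\langle H(\bar x),\bar y\rangle\big)\Big)^n=r^{n+m}\Big(\tfrac1r+\tfrac{\Delta_\Sigma u(\bar x)-\langle H(\bar x),\bar y\rangle}{n}\Big)^n,
\]
which is the asserted bound.

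I expect the main obstacle to be the curved-space interaction of the $n$ horizontal and $m$ vertical Jacobi fields: unlike in $\mathbb R^{n+m}$, the $J_i$ do not stay tangent to a fixed distribution and $\gamma'(0)=rD^\Sigma u(\bar x)+r\bar y$ is neither horizontal nor vertical, so one cannot simply split $T_pM$ orthogonally and argue block by block. Making ``curved volume $\le$ Euclidean-model volume'' rigorous requires running the matrix Riccati / index-form argument of \cite{Brendle2} for the full $(n+m)\times(n+m)$ system after the $\tfrac1t$-renormalization, and checking that the horizontal block of $S(0^+)$ is governed exactly by $I_n+r(\mathrm{Hess}_\Sigma u(\bar x)-A_{\bar y})$ while the remaining $m$ directions contribute precisely the factor $r^m$; this, together with confirming via the definition of $A_r$ that $\gamma$ has no interior conjugate point, is the technical heart and the point where the sectional --- not merely Ricci --- curvature hypothesis is genuinely used.
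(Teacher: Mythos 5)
The paper offers no proof of this lemma; it simply cites Corollary 4.7 of Brendle's paper \cite{Brendle2}. Your proposal is a sketch of Brendle's argument and matches it in its essential structure: the Jacobi-field initial data from the Gauss and Weingarten formulas, the pointwise inequality $\frac1r g + \mathrm{Hess}_\Sigma u - A_{\bar y}\ge 0$ extracted from membership in $A_r$ together with the ambient Hessian comparison, the block-triangular factorization yielding the factor $r^m$, and the concluding arithmetic--geometric mean step. You correctly identify the technical heart, namely that ``curved volume $\le$ affine-Jacobi-field volume.'' One caveat worth flagging: your framing of this step as a single Riccati comparison against a flat ``Euclidean model'' is not literally how Brendle runs the argument. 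Because $P(0)$ is singular in the $m$ normal directions, $S(0^+)$ is not directly usable as an initial datum; Brendle instead proves (Proposition 4.5 in \cite{Brendle2}) a family of inequalities, indexed by $t\in[0,1]$ and $w\in T_{\bar x}\Sigma$, which simultaneously rules out focal points along $\gamma$ and controls the logarithmic derivative of the Jacobian, and Corollary 4.7 then follows without ever forming $\det\bigl(I_n + r(\mathrm{Hess}_\Sigma u - A_{\bar y})\bigr)$ explicitly. So while your determinant bound $|\det D\Phi_r|\le r^m\det\bigl(I_n + r(\mathrm{Hess}_\Sigma u - A_{\bar y})\bigr)$ is a correct intermediate claim and would suffice, the verification you would need to supply is Brendle's index-form argument rather than a naive flat-model comparison; as you already acknowledge, that is exactly where the work lies.
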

	
	\begin{proof}
		See Corollary 4.7 in \cite{Brendle2}.
	\end{proof}
	
	\begin{Lemma}\label{lem3.3}
		The Jacobian determinant of $\Phi_r$ satisfies 
		\[
			e^{-\alpha\frac{d(x,\Phi_r(x,y))^2}{r^2}}|\det D\Phi_r(x,y)|\le r^{n+m}e^{\frac nr-n}e^{-\big|\sqrt{\alpha} y+\frac{1}{2\sqrt{\alpha}}H\big|^2}h(x)^2
		\]
		for all $(x,y)\in A_r$.
	\end{Lemma}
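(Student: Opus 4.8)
The plan is to follow the proof of Lemma~\ref{lem2.3}, now carrying along the extra normal variable $y$ and the mean-curvature term present in the Neumann problem (\ref{neumann2}). First I would expand $\mathrm{div}(h^2 D^\Sigma u) = h^2\Delta_\Sigma u + 2h\langle D^\Sigma h, D^\Sigma u\rangle$ and, using (\ref{neumann2}), divide by $h^2$ to solve for the Laplacian:
\[
\Delta_\Sigma u = \log h^2 - \frac{|D^\Sigma h|^2}{\alpha h^2} - \frac{2\langle D^\Sigma h, D^\Sigma u\rangle}{h} - \frac{|H|^2}{4\alpha}.
\]
Then Cauchy--Schwarz together with Young's inequality $2ab \le \alpha^{-1}a^2 + \alpha b^2$ (with $a = |D^\Sigma h|/h$ and $b = |D^\Sigma u|$) absorbs the two middle terms, yielding $\Delta_\Sigma u \le \log h^2 + \alpha|D^\Sigma u|^2 - \frac{|H|^2}{4\alpha}$ pointwise on $\Sigma$.

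Next, for $(x,y)\in A_r$ I would subtract $\langle H(x),y\rangle$ and complete the square in the normal variable, using $\big|\sqrt{\alpha}\,y + \tfrac{1}{2\sqrt{\alpha}}H\big|^2 = \alpha|y|^2 + \langle H,y\rangle + \tfrac{1}{4\alpha}|H|^2$, to obtain
\[
\Delta_\Sigma u(x) - \langle H(x),y\rangle \le \log h(x)^2 + \alpha\big(|D^\Sigma u(x)|^2 + |y|^2\big) - \Big|\sqrt{\alpha}\,y + \tfrac{1}{2\sqrt{\alpha}}H(x)\Big|^2.
\]
Feeding this into Lemma~\ref{lem3.2}, writing $\tfrac1r + \tfrac{\Delta_\Sigma u - \langle H,y\rangle}{n} = \tfrac1n\big(\tfrac nr + \Delta_\Sigma u - \langle H,y\rangle\big)$ and applying $t\le e^{t-1}$ (legitimate because this quantity is nonnegative on $A_r$, exactly as in Brendle's argument), I get
\[
|\det D\Phi_r(x,y)| \le r^{n+m}e^{\frac nr - n}\,h(x)^2\,e^{\alpha(|D^\Sigma u(x)|^2 + |y|^2)}\,e^{-\big|\sqrt{\alpha}\,y + \frac{1}{2\sqrt{\alpha}}H(x)\big|^2}.
\]

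Finally, I would identify the transport distance on $A_r$. Evaluating the defining inequality of $A_r$ at the point $x$ itself gives $d(x,\Phi_r(x,y))^2 \ge r^2(|D^\Sigma u(x)|^2 + |y|^2)$, while the geodesic $t\mapsto\exp_x\!\big(t(rD^\Sigma u(x)+ry)\big)$, $t\in[0,1]$, together with the orthogonality $D^\Sigma u(x)\perp y$, gives the reverse bound; hence $d(x,\Phi_r(x,y))^2 = r^2(|D^\Sigma u(x)|^2 + |y|^2)$ for $(x,y)\in A_r$. Substituting $\alpha(|D^\Sigma u(x)|^2+|y|^2) = \alpha\,d(x,\Phi_r(x,y))^2/r^2$ into the previous display and multiplying through by $e^{-\alpha d(x,\Phi_r(x,y))^2/r^2}$ gives the claimed inequality. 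The one genuinely new point relative to Lemma~\ref{lem2.3} is the square-completion step: the curvature term $\langle H,y\rangle$ coming from Lemma~\ref{lem3.2} and the term $-\tfrac{1}{4\alpha}h^2|H|^2$ written into (\ref{neumann2}) must recombine into the exact square $-\big|\sqrt{\alpha}\,y + \tfrac{1}{2\sqrt{\alpha}}H\big|^2$ — which is precisely why that coefficient is chosen in the Neumann data — and beyond this algebra I do not expect any real obstacle, since Lemmas~\ref{lem3.1} and~\ref{lem3.2} already supply the geometric input.
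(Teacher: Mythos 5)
Your proof is correct and follows essentially the same route as the paper: expand the divergence term from (\ref{neumann2}), absorb the cross term by Cauchy--Schwarz/Young, complete the square in the normal variable to produce $-\big|\sqrt{\alpha}\,y+\tfrac{1}{2\sqrt{\alpha}}H\big|^2$, apply Lemma~\ref{lem3.2} with $\lambda\le e^{\lambda-1}$, and then trade $\alpha(|D^\Sigma u|^2+|y|^2)$ for $\alpha\,d(x,\Phi_r(x,y))^2/r^2$ via the defining inequality of $A_r$. The only superfluous step is proving equality $d(x,\Phi_r(x,y))^2=r^2(|D^\Sigma u(x)|^2+|y|^2)$; the one-sided bound $d(x,\Phi_r(x,y))^2\ge r^2(|D^\Sigma u(x)|^2+|y|^2)$ coming from evaluating the $A_r$-condition at $x=\bar{x}$ already suffices, since it is the direction needed to dominate $e^{\alpha(|D^\Sigma u|^2+|y|^2)}$ by $e^{\alpha d^2/r^2}$.
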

	
	\begin{proof}
		By (\ref{neumann2}), we have 
		\begin{align*}
		\mathrm{div}(h^2D^\Sigma u)&=h^2\Delta_\Sigma u+2\langle hD^\Sigma h,D^\Sigma u\rangle \\
		&=h^2\log h^2-\frac{1}{\alpha}|D^\Sigma h|^2-\frac{1}{4\alpha}h^2|H|^2.
		\end{align*}
		Using the Cauchy-Schwarz inequality, one can derive that
		\begin{align*}
			&\Delta_\Sigma u(x)-\langle H(x),y\rangle\\
			&=\log h(x)^2-\frac{|D^\Sigma h(x)|^2}{\alpha h(x)^2}-\frac{1}{4\alpha}|H(x)|^2-2\frac{\langle D^\Sigma h(x),D^\Sigma u(x)\rangle}{h(x)}-\langle H(x),y\rangle\\
			&\le\log h(x)^2+\alpha|D^\Sigma u(x)|^2-\frac{1}{4\alpha}|H(x)|^2-\langle H(x),y\rangle\\
			&=\log h(x)^2+\alpha(|D^\Sigma u(x)|^2+y^2)-\Big|\sqrt{\alpha}y+\frac{1}{2\sqrt{\alpha}}H(x)\Big|^2,
		\end{align*}
		By Lemma \ref{lem3.2} and the elementary inequality $\lambda\le e^{\lambda-1}$, it follows that
		\[
			e^{-\alpha\frac{d(x,\Phi_r(x,y))^2}{r^2}}|\det D\Phi_r(x,y)|\le r^{n+m}e^{\frac nr-n}e^{-\big|\sqrt{\alpha} y+\frac{1}{2\sqrt{\alpha}}H\big|^2}h(x)^2.
		\]
	\end{proof}
	
	\begin{proof}[\textbf{Proof of Theorem \ref{thm1.3}}]
		Combining Lemma \ref{lem3.1} and Lemma \ref{lem3.3} with the formula for change of variables in multiple integrals, we obtain
		\begin{align*}
			&\int_{\Sigma_r} e^{-\alpha\frac{(r_o+d(o,\xi))^2}{r^2}} \, d\mathrm{vol}(\xi)\\
			&\le\int_\Sigma\Big(\int_{T_x^\perp\Sigma}e^{-\alpha\frac{d(x,\Phi_r(x,y))^2}{r^2}} |\det D\Phi_r(x,y)|1_{A_r}(x,y)\, dy\Big)\, d\mathrm{vol}(x)\\
			&\le r^{n+m}e^{\frac nr-n}\int_\Sigma h(x)^2\Big(\int_{T_x^\perp\Sigma}e^{-\big|\sqrt{\alpha}y+\frac{1}{2\sqrt{\alpha}}H(x)\big|^2}\, dy\Big)\, d\mathrm{vol}(x)\\
			&=r^{n+m}e^{\frac nr-n}\Big(\frac{\pi}{\alpha}\Big)^{\frac m2}\int_\Sigma h(x)^2\, d\mathrm{vol}(x).
		\end{align*}
		Note that $B_o\Big(\frac{r^2}{\alpha}-r_o\Big)\subset\Sigma_{r}$, the co-area formula implies 
		\[
			\int_0^{\frac{r^2}{\alpha}-r_o} e^{-\alpha\frac{(r_o+\rho)^2}{r^2}}(n+m)\theta\omega_{n+m}\rho^{n+m-1}\, d\rho\le r^{n+m}e^{\frac nr-n}\Big(\frac{\pi}{\alpha}\Big)^{\frac m2}\int_\Sigma h(x)^2\, d\mathrm{vol}(x).
		\]
		Combining this inequality with (\ref{scale2}) and letting $h=f+\frac1r$ as $r\to\infty$, one can get
		\begin{equation}
			\begin{aligned}
				&\int_\Sigma f^2\Big(\log f^2+n+\frac n2\log\frac{\pi}{\alpha}+\log\theta\Big)\, d\mathrm{vol}-\frac{1}{\alpha}\int_\Sigma |D^\Sigma f|^2 \, d\mathrm{vol}\\
				&-\frac{1}{4\alpha}\int_\Sigma f^2|H|^2 \, d\mathrm{vol}
				\le\Big(\int_\Sigma f^2\, d\mathrm{vol}\Big)\log\Big(\int_\Sigma f^2\, d\mathrm{vol}\Big).
			\end{aligned}
		\end{equation}
		Let $\int_\Sigma f^2 \, d\text{vol}=1$, we find
		\[
			\int_\Sigma f^2\log f^2\, d\mathrm{vol}\le-n-\frac n2\log\frac{\pi}{\alpha}-\log\theta+\frac{1}{\alpha}\int_\Sigma |Df|^2+\frac14f^2|H|^2 \, d\mathrm{vol}.
		\]
		Minimizing the right hand side with respect to $\alpha>0$ yields 
		\[
			\alpha=\frac2n\int_\Sigma |Df|^2+\frac14f^2|H|^2 \, d\mathrm{vol},
		\]
		which implies 
		\[
			\int_\Sigma f^2\log f^2\le\frac n2\log\bigg(\frac{2}{\pi e n}\theta^{-\frac 2n}\int_\Sigma |Df|^2+\frac14f^2|H|^2 \, d\mathrm{vol}\bigg).
		\]
	\end{proof}
	
	\section*{Acknowledgements}
	We are grateful to Professor Frank Morgan for his useful correspondence, and to Professor Yimin Chen for his helpful discussions.

\end{document}